\documentclass{amsart}
\usepackage{geometry, graphicx, layout, multicol, tikz, textcomp, wrapfig, float, xfrac, faktor, nicefrac, setspace, amsthm, epstopdf, xcolor, multirow, amssymb, amsmath, tensor, mathrsfs}
\usepackage[utf8]{inputenc}
\usepackage[english]{babel}
\usepackage[utf8]{inputenc}
\usepackage[english]{babel}
\RequirePackage{fix-cm}
\usepackage[fontsize=12pt]{fontsize}
\usepackage[pagewise]{lineno}
\theoremstyle{plain}
\usepackage{enumerate,bm}

\title{Groups with a Fixed Character Degree: The General Case}
\author{Mark L. Lewis \and Brandon Martin}
\subjclass{20C15}
\keywords{Character degrees, irreducible characters, finite groups}
\address{Department of Mathematical Sciences, Kent State University, Kent, OH 44242}
\email{lewis@math.kent.edu}
\email{bmarti52@kent.edu}

\begin{document}
\onehalfspacing
\newtheorem{theorem}{Theorem}
\newtheorem{lemma}{Lemma}
\newtheorem*{ulemma}{Lemma}
\newtheorem{sublemma}{Sublemma}[lemma]
\newtheorem*{utheorem}{Theorem}
\newtheorem*{theoremone}{Theorem 1}
\newtheorem*{theoremtwo}{Theorem 2}

\maketitle

\begin{abstract} 
 We obtain arithmetic conditions which are satisfied by solvable groups admitting an abelian $\pi$-subgroup.  First, we extend a previous characterization by removing the square-free hypothesis on some fixed irreducible character degree.  We then show the same arithmetic conditions follow from a faithful action of an abelian $\pi$-subgroup on the $\pi'$-part of the Fitting subgroup, where $\pi$ is the set of prime divisors of the aforementioned character degree. 
\end{abstract}

\section{Introduction} 
We will let all groups be finite throughout this paper.  Let $G$ be a group of order $d(d+e)$ where $e>1$ is an integer and $d\in\text{cd}(G)$, where $\text{cd}(G)=\{\chi(1)\text{ $|$ }\chi\in\text{Irr}(G)\}$.  N. Hung, Lewis, and Schaeffer Fry \cite{LHS} proved that if $d$ is the degree of a complex irreducible character of $G$ where $|G|=d(d+e)$ for some integer $e>1$, then $|G|\leq e^4-e^3$.  Their proof uses the classification of finite simple groups.  Also, by earlier work by Isaacs \cite{Is}, the bound of $e^4-e^3$ is the best possible. \par 

The above results assume that $d\in\text{cd}(G)$.  We now consider this problem from the opposite perspective, as suggested by the authors of \cite{LHS}.  In particular, we will no longer assume that $d\in\text{cd}(G)$, and we look to characterize conditions on $G$ which result in $d$ being an irreducible character degree of $G$. \par 

Let $G$ be a group of order $d(d+e)$, where $d,e>1$.  Snyder \cite{Sny} classified all such groups with $e=2$ and $e=3$ which has $d$ as a complex irreducible character degree.
Durfee and Jensen \cite{DJ} classified all such groups with $4\leq e\leq 6$, as well as all irreducible character degrees, $d$, which can occur with $e=7$. Following this, Sambale \cite{Sam} extended this classification of $d$ for all values when $e\leq 11$.  Much of the previous work has relied on computer algebra systems, with obvious limitations as $e$ gets larger.   \par
The results above concern determining when some fixed integer occurs as an irreducible character degree.  Motivated by these problems, we instead investigate the arithmetic structure forced by solvable groups admitting large abelian $\pi$-subgroups.\par  Previously, in \cite{Brandon}, we considered the case when $(d,d+e)=1$ and $d$ is square-free.  We proved that the existence of the character degree $d$ is equivalent to an explicit system of arithmetic congruences such that the product of the moduli is equal to our irreducible character degree, $D$.
In this paper, we extend these ideas in two directions.  First, we remove the square-free hypothesis on $d$, but we still assume that $d$ is an irreducible character degree of our group.  We prove the same arithmetic characterization continues to hold for arbitrary $d$-part. In particular, we show the following: 
\begin{theorem}\label{theorem 1}
    Let $\{d_1,\dots,d_m,p_1\dots,p_n\}$ be a set of distinct primes such that $k_j,a_i\in\mathbb{N}$ for all $i,j$.  Let $\pi=\{d_1,\dots,d_m\}$, and $D=d_1^{k_1}\cdots d_m^{k_m}$.
    Let $G$ be a solvable group such that $|G|=Dp_1^{a_1}\cdots p_n^{a_n}$, $D\in\text{cd}(G)$, and any $A\in\text{Hall}_\pi(G)$ is abelian.  Then there exist prime powers $h_1,\dots,h_t$ and $b_1,\dots,b_t\in\mathbb{N}$, such that $$ h_1\equiv 1(\text{mod }b_1),\dots, h_t\equiv 1(\text{mod }b_t),$$ where $b_1\cdots b_t=D$ and $h_1\cdots h_t\mid p_1^{a_1}\cdots p_n^{a_n}.$
\end{theorem}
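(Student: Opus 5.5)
Let $\chi\in\text{Irr}(G)$ with $\chi(1)=D$, and let $A\in\text{Hall}_\pi(G)$, so $|A|=D$ and $A$ is abelian. The plan is to realize the congruences as orbit-length conditions coming from a faithful action of $A$ on $\pi'$-groups, with each $b_i$ an orbit length and each $h_i$ the order of a $\pi'$-chief factor.

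\textbf{Step 1: pass to a $\pi'$-normal subgroup.} Since $\chi(1)=D=|G|_\pi$, $\chi$ has $\pi$-defect zero, so $\chi$ vanishes on $\pi$-singular elements. In particular $O_\pi(G)=1$: a normal $\pi$-subgroup $M$ would give $\chi_M=e\sum\theta_i$ with $\theta_i(1)$ and $|G:I_G(\theta)|$ dividing $|G|$, and then $\chi(1)_\pi\le|G:M|_\pi<D$ (Ito plus Clifford). Thus $F=F(G)$ is a $\pi'$-group, and for solvable $G$ we have $C_G(F)\le F$. Hence $A$ acts faithfully by conjugation on $F$, and also on $V=F/\Phi(F)$, a completely reducible $\pi'$-module (coprime action; Maschke). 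Decompose $V=V_1\oplus\cdots\oplus V_t$ into irreducible $A$-modules over prime fields. Here each $|V_i|$ is a prime power, and $\prod|V_i|$ divides $|F|$, which divides $p_1^{a_1}\cdots p_n^{a_n}$.

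\textbf{Step 2: extract the congruences.} For each irreducible $V_i$ with kernel $K_i$, the quotient $A/K_i$ is cyclic of order $c_i$, and it acts fixed-point-freely on $V_i\setminus\{0\}$. Hence $c_i\mid |V_i|-1$, that is, $|V_i|\equiv 1 \pmod{c_i}$. Since $A$ acts faithfully, $\bigcap K_i=1$, and $A$ embeds in $\prod A/K_i$. So $D\mid\operatorname{lcm}$-type products of the $c_i$, but we need exact factorization $b_1\cdots b_t=D$. Choose $b_i\mid c_i$ greedily: list the chain $A=A_0> A_1=K_1> A_2=K_1\cap K_2>\cdots> A_t=1$, and set $b_i=|A_{i-1}:A_i|$. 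Then $A_{i-1}/A_i$ embeds in $A/K_i$, so $b_i\mid c_i\mid |V_i|-1$, and $\prod b_i=D$. Discarding indices with $b_i=1$ and setting $h_i=|V_i|$ gives the statement.

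\textbf{Main obstacle.} The key point is Step 1, namely that $A$ acts faithfully on $F(G)$, equivalently $O_\pi(G)=1$ together with $C_G(F)\le F$. The latter is standard for solvable groups. So the real content is showing that the defect-zero character forces $O_\pi(G)=1$; I would prove this via Clifford theory as sketched, using that character degrees of $G/M$-projective representations divide $|G:M|$.

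Step 1 is where the hypothesis $D\in\text{cd}(G)$ is used. Step 2 uses abelianness of $A$, through the cyclic quotients $A/K_i$ acting fixed-point-freely on the $V_i$.
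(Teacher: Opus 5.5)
This is essentially the paper's proof. You kill $O_\pi(G)$, get faithfulness on $F$ from $C_G(F)\le F$ and on $F/\Phi(F)$ from Burnside's Frattini lemma, decompose by Maschke, take the chain of successive kernels, and use that an abelian group acting faithfully and irreducibly acts fixed-point-freely, so $b_r\mid h_r-1$.

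One repair is needed in Step 1. Clifford theory with the projective-degree theorem only gives $\chi(1)_\pi\le\theta(1)\,|G:M|_\pi$, not $\chi(1)_\pi\le|G:M|_\pi$. To close it, either note (as the paper does) that $M=O_\pi(G)$ lies in every Hall $\pi$-subgroup and hence in the abelian $A$, so $\theta(1)=1$ and Ito gives $D\mid|G:M|$. Or use $\theta(1)^2\le|M|$ to get $|M|\le\theta(1)\le|M|^{1/2}$.
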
\par
Second, we consider the more general setting in which $d$ and $d+e$ need not be coprime.  In this setting, the arithmetic characterization no longer requires the assumption that $d$ is an irreducible character degree.  Instead, it follows from the existence of an abelian $\pi$-subgroup acting faithfully on the $\pi'$-part of the Fitting subgroup, where $\pi$ is the set of prime divisors of $d$. In particular, we show the following: 
\begin{theorem}\label{theorem 2}
    Let $\pi=\{d_1,\dots, d_m\}$ and $\omega=\{p_1,\dots ,p_n\}$ be sets of primes.  Let $D=d_1^{k_1}\cdots d_m^{k_m}$ where $k_j\in\mathbb{N}$ for all $j$.  Define $$\bar\omega=\omega\setminus\pi \text{   and   } \tau=\pi\cap\omega.$$  
    Suppose $G$ is a solvable group of order $Dp_1^{a_1}\cdots p_n^{a_n}$, where $a_i\in\mathbb{N}$ for all $i$, such that $A\leq G$ is an abelian $\pi$-subgroup of order $D$ such that $C_A(F_{\pi'})=1$ where $$F_{\pi'}=\prod_{q\not\in\pi}O_q(G)=\prod_{q\in\bar\omega}O_q(G).$$  Then there exist prime powers $h_1,\dots,h_t$ and $b_1,\dots,b_t\in\mathbb{N}$, such that $$ h_1\equiv 1(\text{mod }b_1),\dots, h_t\equiv 1(\text{mod }b_t),$$ where $b_1\cdots b_t=D$ and $h_1\cdots h_t\mid p_1^{a_1}\cdots p_n^{a_n}.$
\end{theorem}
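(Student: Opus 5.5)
The plan is to use the faithful action of $A$ on $F_{\pi'}$ and break it into pieces on which $A$ acts through a cyclic quotient, in the manner of a Singer cycle. Write $F_{\pi'}=\prod_{q\in\bar\omega}O_q(G)$. Since $A$ is a $\pi$-group and $F_{\pi'}$ is a $\pi'$-group, coprime action is available. Because $C_A(F_{\pi'})=1$, $A$ acts faithfully on $F_{\pi'}$.

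\textbf{Step 1: pass to Frattini quotients.} For each $q\in\bar\omega$, put $V_q=O_q(G)/\Phi(O_q(G))$. By the standard coprime-action fact (Burnside), an automorphism of coprime order that is trivial on $V_q$ is trivial on $O_q(G)$. Hence $A$ acts faithfully on $V=\bigoplus_{q\in\bar\omega}V_q$.

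\textbf{Step 2: decompose into irreducibles.} By Maschke's theorem, each $V_q$ is a completely reducible $\mathbb{F}_q[A]$-module, say $V_q=W_{q,1}\oplus\cdots\oplus W_{q,r_q}$ with each $W_{q,j}$ irreducible. Since $A$ is abelian, the quotient $A/C_A(W_{q,j})$ acts faithfully and irreducibly on $W_{q,j}$. By Schur's lemma it is therefore cyclic and embeds in $\mathbb{F}_{q^{f}}^\times$, where $|W_{q,j}|=q^{f}$. Setting $c_{q,j}=|A/C_A(W_{q,j})|$, we get $q^{f}\equiv 1\pmod{c_{q,j}}$. Moreover $\prod_{q,j}|W_{q,j}|=|V|$ divides $|F_{\pi'}|$, which divides $p_1^{a_1}\cdots p_n^{a_n}$.

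\textbf{Step 3: select pieces whose moduli multiply to exactly $D$.} Faithfulness gives $\bigcap_{q,j}C_A(W_{q,j})=1$, so $A$ embeds in $\prod A/C_A(W_{q,j})$. From this we need prime powers $h_i$ (each a $|W_{q,j}|$ for distinct modules) and integers $b_i$ with $b_i$ dividing $c_{q,j}$ and $\prod b_i=D$.

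I would choose the pieces greedily. Order the modules $W_1,\dots,W_s$ and set $K_0=A$ and $K_i=K_{i-1}\cap C_A(W_i)$. Then $b_i=|K_{i-1}:K_i|$ divides $|A:C_A(W_i)|$, because $K_{i-1}/K_i$ embeds in $A/C_A(W_i)$. Since $K_s=1$, telescoping gives $\prod b_i=|A|=D$. Each $h_i=|W_i|$ satisfies $h_i\equiv1\pmod{c_i}$, hence $h_i\equiv 1\pmod{b_i}$.

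Terms with $b_i=1$ can be discarded, or kept harmlessly since $h\equiv1\pmod 1$. Finally, $\prod h_i$ divides $|V|$, which divides $p_1^{a_1}\cdots p_n^{a_n}$. Note that the primes in $\tau$ play no role, since only the $\bar\omega$-parts are used.

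\textbf{Expected obstacle.} The main point needing care is Step 3, where the moduli must multiply to exactly $D$ and not merely to a multiple of it. The telescoping chain of centralizer intersections handles this. Step 1 also needs care: one must make sure the faithfulness hypothesis on $F_{\pi'}$ transfers to the elementary abelian sections, and this is exactly the coprime-action lemma.
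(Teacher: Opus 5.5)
Your proposal is correct and follows essentially the same route as the paper: pass to Frattini quotients $O_q(G)/\Phi(O_q(G))$ for $q\in\bar\omega$ via the coprime-action lemma, decompose by Maschke, use that an abelian group acting faithfully and irreducibly is cyclic inside a finite field (Schur plus Wedderburn) to get $b_i\mid h_i-1$, and telescope along the centralizer chain $K_i=C_{K_{i-1}}(W_i)$, whose successive quotients embed in $A/C_A(W_i)$. The only cosmetic difference is that you run through all irreducible summands in a fixed order and allow steps with $b_i=1$, whereas the paper only picks summands that give a proper centralizer.
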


\section{Background}
We begin by showing two lemmas that will be used in the proofs of the main results.  
\begin{lemma}
    Let $P$ be a finite $p$-group for some prime $p$ and $\alpha\in\text{Aut}(P)$ such that $(|\alpha|,p)=1$.  If $\alpha$ acts trivially on $P/\Phi(P)$, where $\Phi(P)$ is the Frattini subgroup of $P$, then $\alpha=1$.  Equivalently, if $H$ is a $p'$-group acting on $P$ and acting trivially on $P/\Phi(P)$, then $H$ acts trivially on $P$.  
\end{lemma}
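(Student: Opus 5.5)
The plan is to prove the second formulation, since the first follows from it by taking $H=\langle\alpha\rangle$, whose order is coprime to $p$. We argue by induction on $|P|$, using two standard facts about coprime action. The first is the Burnside basis theorem: any subset of $P$ whose image generates $P/\Phi(P)$ already generates $P$. The second is coprime action on quotients: if a $p'$-group $H$ acts on a $p$-group $P$ and $N$ is an $H$-invariant normal subgroup, then
$$C_{P/N}(H)=C_P(H)N/N.$$
This second fact follows from a Sylow/Frattini counting argument. For $xN$ fixed by $H$, the coset $xN$ is permuted by $H$. Alternatively, one considers the semidirect product $N\rtimes H$ acting on $xN$: the subgroup $H$ is a complement, and the Schur--Zassenhaus theorem shows that some conjugate of $H$ fixes a point. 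A cleaner route is to look at the orbits of $H$ on the coset $xN$. This set has size $|N|$, a power of $p$, while every $H$-orbit has size dividing $|H|$, which is coprime to $p$. That alone does not force a fixed point, so I would instead invoke Glauberman's lemma in its coprime form, or Schur--Zassenhaus, to obtain a fixed point in $xN$.

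Granting this fact, the argument is short. Apply it with $N=\Phi(P)$, which is characteristic and hence $H$-invariant. Since $H$ acts trivially on $P/\Phi(P)$, we get $C_{P/\Phi(P)}(H)=P/\Phi(P)$. Hence
$$P=C_P(H)\Phi(P).$$
By the Burnside basis theorem (non-generators can be dropped), this gives $P=C_P(H)$, so $H$ acts trivially on $P$. The equivalence with the $\alpha$-formulation is immediate: an automorphism acting trivially on $P$ is the identity.

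The main obstacle is establishing the fixed-point lemma $C_{P/N}(H)=C_P(H)N/N$ rigorously. I would first try to prove it directly. Let $xN$ be $H$-fixed. Then $H$ normalizes $N\langle x\rangle$, so we may form $N\langle x\rangle\rtimes H$; this reduction would need some care. A cleaner plan is to work in $G=NH$ acting on the coset $xN$ via $n\cdot y=ny$ and $h\cdot y=y^h$. This action is transitive because $N$ is transitive on the coset. The point stabilizer $S$ then has index $|N|$ in $G$, so $|S|=|H|$, making $S$ a Hall $p'$-subgroup of $G$. By Schur--Zassenhaus, $S$ is conjugate to $H$ in $G$, so $H$ fixes some point of $xN$. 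That fixed point is an element of $C_P(H)\cap xN$, which is exactly what is needed.

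Alternatively, one could avoid this lemma entirely with the classical ``$[P,H,H]=[P,H]$'' argument. We have $[P,H]\le\Phi(P)$, and combining this with $P=[P,H]C_P(H)$ for coprime action would force $P=C_P(H)\Phi(P)$ and then $P=C_P(H)$. I expect the stabilizer/Schur--Zassenhaus route to be the one written out.
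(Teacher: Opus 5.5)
Your proof is correct, and its skeleton matches the paper's. Both arguments establish $P=C_P(H)\Phi(P)$ and then use the non-generator property of $\Phi(P)$ to conclude $P=C_P(H)$.

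The difference is how $P=C_P(H)\Phi(P)$ is obtained:
\begin{itemize}
\item The paper quotes the coprime-action decomposition $P=[P,\alpha]C_P(\alpha)$. It then notes that triviality on $P/\Phi(P)$ forces $[P,\alpha]\le\Phi(P)$.
\item You instead prove directly that fixed points lift, $C_{P/N}(H)=C_P(H)N/N$. You let $N\rtimes H$ act transitively on the coset $xN$, observe that a point stabilizer has order $|H|$ and is therefore a complement to $N$, and invoke the conjugacy part of Schur--Zassenhaus so that $H$ itself fixes a point. Conjugacy holds unconditionally here because $N$ is a $p$-group.
\end{itemize}

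The paper's route is shorter but rests on a black-box fact. The standard proof of that fact is exactly your lifting lemma applied with $N=[P,H]$, since $H$ acts trivially on $P/[P,H]$. Your version is self-contained modulo Schur--Zassenhaus.

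For a clean write-up, three things should change:
\begin{itemize}
\item Drop the announced induction on $|P|$, which you never use.
\item Drop the abandoned orbit-counting attempts.
\item Fix the action conventions. With exponential notation, $y\mapsto y^h$ is a right action. Pair it with right multiplication $y\mapsto yn$, which also preserves $xN$ and is regular on it, to get an honest action of $N\rtimes H$ on $xN$.
\end{itemize}
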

\begin{proof}
    Let $\alpha\in\text{Aut}(P)$ have $p'$-order and suppose $\alpha$ acts trivially on $P/\Phi(P)$.  Let $\langle\alpha\rangle$ act on $P$.  Since $(|\alpha|,p)=1$, we have a coprime action and so $P=[P,\alpha]C_p(\alpha)$ where $[P,\alpha]=\langle x^{-1}x^\alpha:x\in P\rangle$.  Since $\alpha$ acts trivially on $P/\Phi(P)$, for every $x\in P$, we have $x^\alpha\phi(P)=x\Phi(P)$ and so $x^{-1}x^\alpha\in\Phi(P)$.  Thus $[P,\alpha]\leq \Phi(P)$ and so $P=[P,\alpha]C_P(\alpha)\leq\Phi(P)C_P(\alpha).$  Hence $P=\Phi(P)C_P(\alpha)$.  By the Frattini property \cite{FGT}, we have that $P=C_P(\alpha)$ and so $\alpha$ fixes every element of $P$.  Thus $\alpha=1$.
\end{proof}

Lemma 1 above allows us to obtain faithful coprime actions by passing to Frattini quotients. Now, we will show that an abelian group acting irreducibly on a finite vector space must be cyclic.
\begin{lemma}
    Let $V$ be a finite-dimensional vector space over $\mathbb{F}_q$, for some $q$, and let $G\leq \text{GL}(V)$ be an abelian subgroup acting irreducibly on $V$.  Then, $G$ is cyclic.
\end{lemma}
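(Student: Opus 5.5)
The plan is to use Schur's lemma to embed $G$ in the multiplicative group of a finite field, which is cyclic. Let $E=\text{End}_{\mathbb{F}_q[G]}(V)$ be the ring of $\mathbb{F}_q$-linear maps $V\to V$ commuting with every element of $G$. Since $V$ is an irreducible $\mathbb{F}_q[G]$-module, Schur's lemma makes $E$ a division ring. A nonzero $\phi\in E$ has $\ker\phi$ and $\text{im}\,\phi$ both $G$-invariant, which forces $\ker\phi=0$ and $\text{im}\,\phi=V$.

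Next we note that $E$ is finite, because it is a subring of $\text{End}_{\mathbb{F}_q}(V)$ and $V$ is finite-dimensional over a finite field. By Wedderburn's little theorem, every finite division ring is commutative, so $E$ is a finite field.

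Now we use that $G$ is abelian. For $g\in G$ and any $h\in G$ we have $gh=hg$, so the linear map $g$ commutes with the action of $G$. Hence $g\in E$, and since $g$ is invertible, $g\in E^\times$. The inclusion $G\hookrightarrow E^\times$ is an injective group homomorphism, because $G\leq \text{GL}(V)$ and the multiplication in $E$ is composition of maps.

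Finally, the multiplicative group of a finite field is cyclic. This is the standard fact that a finite subgroup of the unit group of a field is cyclic: it has at most $n$ solutions to $x^n=1$ for each $n$. Therefore $E^\times$ is cyclic, and so its subgroup $G$ is cyclic.

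The only nontrivial input is Wedderburn's theorem, which we simply cite. If one wanted to avoid it, an alternative is to let $\mathbb{F}_q[G]$ denote the $\mathbb{F}_q$-subalgebra of $\text{End}(V)$ generated by $G$. This algebra is commutative, and it is a division ring by the same Schur argument applied to nonzero elements acting on the irreducible module $V$. So it is a finite field without appealing to Wedderburn, and $G$ lies in its multiplicative group, which is cyclic. I would present this version, since it needs only commutativity, which is given, and Schur's lemma, which is easy. The main step to check carefully is that every nonzero element of this commutative algebra is invertible within it. Such an element $a$ is injective on $V$, hence invertible in $\text{End}(V)$. By Cayley–Hamilton its inverse is a polynomial in $a$, with nonzero constant term because $a$ is invertible, so the inverse lies in the algebra.
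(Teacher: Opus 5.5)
Your proof is correct. Its first version is exactly the paper's argument: Schur's lemma makes $E=\text{End}_{\mathbb{F}_qG}(V)$ a division ring, finiteness plus Wedderburn's little theorem makes it a field, and commutativity of $G$ places $G$ inside $E^\times$.

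The version you say you would present changes one ingredient. Instead of the full commutant, you work in the subalgebra $\mathbb{F}_q[G]\subseteq\text{End}(V)$ generated by $G$. This subalgebra is commutative because $G$ is abelian, so Wedderburn's theorem is no longer needed. What remains is to show that nonzero elements are invertible inside the subalgebra, and your Cayley--Hamilton argument does this. One wording correction: the nonzero constant term belongs to the characteristic polynomial (it is $\pm\det a$), and that is what lets you express $a^{-1}$ as a polynomial in $a$. Alternatively, a finite commutative ring whose nonzero elements act injectively on $V$ has no zero divisors, so it is a field.

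The paper's route is shorter on the page because it cites two standard theorems. Yours is self-contained and makes plain that commutativity of the field comes from the hypothesis on $G$, not from Wedderburn. In either version, irreducibility also shows that $V$ is one-dimensional over the resulting field (in fact the two fields coincide). That one-dimensionality is what justifies the later identification in the proof of Theorem 1 of $H_r$ with a finite field on which $B_r$ acts by multiplication.
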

\begin{proof}
Let $V$ be an irreducible $\mathbb{F}_qG$-module, where $G\leq \text{GL}(V)$ is abelian.  Fix some $g\in G$.  Since $g$ acts linearly on $V$, we have a $\mathbb{F}_q$-linear map $v\mapsto gv$ for some $v\in V$.  Consider some arbitrary $h\in G$. Since $G$ is abelian, we have $$g(hv)=(gh)v=(hg)v=h(gv),$$ and so $g(hv)=h(gv)$.  Hence, for each $g\in G$, the linear transformation $v\mapsto gv$ is an element of $E=\text{End}_{\mathbb{F}_qG}(V)$.  By Schur's lemma \cite{CTFG}, since $V$ is irreducible, $E$ is a division ring.  Since $V$ is finite-dimensional over $\mathbb{F}_q$, $E$ is also finite.  Hence $E$ is a finite division ring and hence is a field by Wedderburn's Little theorem \cite{CTFG}.  Now, each $g\in G$ is in the multiplicative group of $E$, that is, $E^\times$, and so $G\leq E^\times$, hence $G$ is cyclic.  

\end{proof}

\section{Main Results}

We now extend the result from \cite{Brandon} by removing the assumption that $d$ is square-free. We restate Theorem 1 below for convenience: 
\begin{theoremone}
    Let $\{d_1,\dots,d_m,p_1\dots,p_n\}$ be a set of distinct primes such that $k_j,a_i\in\mathbb{N}$ for all $i,j$.  Let $\pi=\{d_1,\dots,d_m\}$, and $D=d_1^{k_1}\cdots d_m^{k_m}$.
    Let $G$ be a solvable group such that $|G|=Dp_1^{a_1}\cdots p_n^{a_n}$, $D\in\text{cd}(G)$, and any $A\in\text{Hall}_\pi(G)$ is abelian.  Then there exist prime powers $h_1,\dots,h_t$ and $b_1,\dots,b_t\in\mathbb{N}$, such that $$ h_1\equiv 1(\text{mod }b_1),\dots, h_t\equiv 1(\text{mod }b_t),$$ where $b_1\cdots b_t=D$ and $h_1\cdots h_t\mid p_1^{a_1}\cdots p_n^{a_n}.$
\end{theoremone}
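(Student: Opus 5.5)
We plan to reduce Theorem 1 to the statement of Theorem 2 (or rather to its underlying argument), by showing that an abelian Hall $\pi$-subgroup $A$ acts faithfully on $F_{\pi'}=O_{\pi'}(F(G))$, and then extracting the congruences from an irreducible decomposition of Frattini quotients. First, let $\chi\in\text{Irr}(G)$ with $\chi(1)=D=|A|$. Since $A$ is an abelian Hall $\pi$-subgroup, Ito's theorem applied to $A$ gives that the $\pi$-part of $\chi(1)$ is bounded by $|A:\ldots|$; more concretely, we will use that $\chi(1)$ is the full $\pi$-part of $|G|$, so $\chi$ has $\pi$-defect zero, which forces $O_\pi(G)=1$ (a normal $\pi$-subgroup $N$ would be abelian, and $\chi_N$ irreducible constituents are linear, so $\chi(1)$ divides $|G:N|$, contradicting $\chi(1)=|G|_\pi$ unless $N=1$). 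Hence $F(G)=F_{\pi'}$, and since $G$ is solvable, $C_G(F(G))\le F(G)$; as $A\cap F(G)=1$, we get $C_A(F_{\pi'})=1$.

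Next, write $F_{\pi'}=\prod_{q}O_q(G)$ with $q$ ranging over the $p_i$. Since $A$ acts faithfully on the product and the factors commute and are normal, $A$ acts faithfully on the direct product of the $A$-modules $V_q=O_q(G)/\Phi(O_q(G))$ by Lemma 1 applied factorwise (the action of $A$ on each $O_q(G)$ is coprime, and an element acting trivially on every $V_q$ acts trivially on every $O_q(G)$). By Maschke's theorem each $V_q$ decomposes as a direct sum of irreducible $\mathbb{F}_qA$-modules $W_1,\dots,W_s$. For each such $W$, Lemma 2 says $\bar A_W=A/C_A(W)$ is cyclic and embeds in $E^\times$ for a finite field $E\supseteq\mathbb{F}_q$ with $|W|$ a power of $|E|$; in particular $|\bar A_W|$ divides $|W|-1$, so $|W|\equiv 1 \pmod{|\bar A_W|}$.

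The main obstacle is the bookkeeping: we need the moduli $b_j$ to multiply exactly to $D$, whereas the orders $|A/C_A(W)|$ overlap. Since $\bigcap_W C_A(W)=1$, $A$ embeds in $\prod_W A/C_A(W)$, so $D$ divides $\prod_W|\bar A_W|$ but may be smaller. We resolve this greedily: order the modules $W_1,\dots,W_r$ and set $A_0=A$, $A_j=A_{j-1}\cap C_A(W_j)$; then $A_{j-1}/A_j$ embeds in $A/C_A(W_j)$, so $b_j:=|A_{j-1}:A_j|$ divides $|\bar A_{W_j}|$ and hence $|W_j|\equiv1\pmod{b_j}$, while $\prod_j b_j=|A:A_r|=|A|=D$. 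Taking $h_j=|W_j|$, a prime power, we have $h_1\cdots h_r$ dividing $\prod_q|V_q|$, which divides $\prod_q|O_q(G)|$ and hence $p_1^{a_1}\cdots p_n^{a_n}$, since the $O_q(G)$ are $\pi'$-subgroups. Discarding indices with $b_j=1$ is harmless (or kept, since $h\equiv1\pmod 1$ trivially). The delicate point to verify carefully is the first step that $O_\pi(G)=1$, using the degree $D$ being the full $\pi$-part of $|G|$.
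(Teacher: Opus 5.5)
Your proposal is correct and follows essentially the same route as the paper. Ito's theorem forces $O_\pi(G)=1$, and $C_G(\textbf{F}(G))\le \textbf{F}(G)$ then makes $A$ act faithfully. Lemma 1 and Maschke pass this faithfulness to the irreducible summands of the Frattini quotients, Lemma 2 embeds each $A/C_A(W)$ in a field's multiplicative group, and the chain $A_j=C_{A_{j-1}}(W_j)$ gives moduli with $\prod b_j=D$. The differences are cosmetic: you skip the passage to $G/\ker\chi$, which the paper never uses, and you run the chain over all summands, allowing $b_j=1$, rather than only over those that shrink the centralizer. You also get $b_j\mid |W_j|-1$ from $A_{j-1}/A_j\hookrightarrow A/C_A(W_j)\le E^\times$ instead of from fixed-point-freeness.
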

     \begin{proof}
         Let $\chi\in\text{Irr}(G)$ such that $\chi(1)=D$.  Replace $G$ by $G/\ker\chi$, if necessary, to ensure that $\chi$ is faithful.  Since $G$ is solvable, Hall's theorem \cite{FGT} guarantees the existence of a Hall $\pi$-subgroup of $G$.  Let $A\in\text{Hall}_\pi(G)$.  By assumption, $A$ is abelian.  \par
         Suppose there exists $j$ such that $O_{d_j}(G)\neq 1$.  Then $O_{d_j}(G)$ lies in some $D_j\in\text{Syl}_{d_j}(G)$.  After conjugation, we may assume $D_j\leq A$ and so $O_{d_j}(G)\leq A$.  Since $A$ is abelian, $O_{d_j}(G)$ must be abelian.  Thus $O_{d_j}(G)$ is a nontrivial normal abelian subgroup of $G$.  By \cite{Mich}, we have $D=\chi(1)\mid |G:O_{d_j}(G)|$, a contradiction as \newline $|G:O_{d_j}(G)|_D<D$ .  Thus $O_{d_j}(G)=1$ for all $j$.  For all $i$, define $P_i=O_{p_i}(G)$.  Then $$F:=\textbf{F}(G)=P_1\times\cdots \times P_n,$$ where $\textbf{F}(G)$ is the Fitting subgroup of $G$.  \par 
         Since $G$ is solvable, $C_G(F)\leq F$.  If $x\in C_A(F)$, then $x\in C_G(F)\leq F$.  Since $A$ is a $\pi$-group, and $F$ is a $\pi'$-group, we have $x\in A\cap F=1$ thus $x=1$. Hence, $C_A(F)=1$ and $A$ acts faithfully on $F$.  \par 
         For each $i$, define $$V_i:=P_i/\Phi(P_i).$$  Since. for every $i$, $(|A|,p_i)=1$, Maschke's theorem \cite{CTFG} gives that the action of $A$ on each $V_i$ is completely reducible, and $$V_i=H_{i1}\oplus\cdots\oplus H_{ir_i},$$ where each $H_{ij}$ is an irreducible $\mathbb{F}_{p_i}A$-module.  Let $$\mathscr{H}=\{H_{ij}:1\leq i\leq n, 1\leq j\leq r_i\}.$$ \par 
         Suppose $a\in A$ acts trivially on every $V_i$.  That is, $a$ acts trivially on $P_i/\Phi(P_i)$ for every $i$.  Since $A$ is a $\pi$-group and each $P_i$ is a $\pi'$-group, we may apply Lemma 1 and so $a$ acts trivially on each $P_i$.  Thus $a$ acts trivially on $F$.  Since we have shown that $a\in C_A(F)=1$, we must have that $a=1$. Thus, $\bigcap_{i=1}^n C_A(V_i)=1$. Since $V_i=H_{i1}\oplus\cdots\oplus H_{ir_i}$, an element of $A$ centralizes every $H_{ij}$ if and only if that element centralizes every $V_i$.  Hence, $$\bigcap_{H\in\mathscr{H}}C_A(H)=1.$$ \par
         Set $A_0=A$.  If $A_0=1$, we have nothing to prove.  Otherwise, by above, we have $\bigcap_{H\in\mathscr{H}}C_{A_0}(H)=1$.  Thus, there exists some $H_1\in\mathscr{H}$ such that $C_{A_0}(H_1)\neq A_0$.  Define $$A_1:=C_{A_0}(H_1).$$  Then $A_1<A_0$ and we can continue in this way until we have constructed the chain $$A=A_0>A_1>\cdots >A_{r-1}.$$  If $A_{r-1}=1$, stop.  Otherwise, from the same reason as above, there exists some $H_r\in\mathscr{H}$ such that $C_{A_{r-1}}(H_r)\neq A_{r-1}$ and we define $$A_r:=C_{A_{r-1}}(H_r).$$  Since $A$ is finite, this process must eventually terminate, and so we get the chain $$A=A_0>A_1>\cdots >A_t=1,$$ where, for each $r$, we define $$B_r:=A_{r-1}/A_r.$$  Then, $\prod_{r=1}^t|B_r|=D$. For each $r$, let $b_r=|B_r|$.     \par 
          The chosen $H_1,\dots,H_t$ from above are all distinct.  Indeed, once $H_s$ has been chosen, we define $A_s:=C_{A_{s-1}}(H_s)$.  Every later subgroup $A_r$, with $r\geq s$, satisfies $A_r\leq A_s$.  Therefore, every later $A_r$ centralizes $H_s$ and so $H_s$ cannot be chosen again later because it would not give a proper centralizer.  Thus each $H_r$ is a distinct irreducible summand among the decompositions of all $V_i$.  Thus $h_1\cdots h_t\mid \prod_{i=1}^n|V_i|$, and since $|V_i|\mid |O_{p_i}(G)|$ for each $i$, we have that $h_1\cdots h_t\mid p_1^{a_1}\cdots p_n^{a_n}$. \par 
         By construction, $A_r=C_{A_{r-1}}(H_r)$ and so $A_r$ is the kernel of the action of $A_{r-1}$ on $H_r$.  Let $$\rho_r:A_{r-1}\rightarrow\text{GL}(H_r).$$ Then, $\ker\rho_r=A_r$, and so by the First Isomorphism theorem, $$B_r=A_{r-1}/A_r\cong \rho_r(A_{r-1}).$$  That is, $B_r$ acts faithfully on $H_r$.\par 
         Fix $r$ so that $H_r\leq V_i$ for some $i$.  Then, $H_r$ is an $\mathbb{F}_{p_i}$-vector space.  Let $\dim_{\mathbb{F}_{p_i}}H_r=e_r$, for some integer $e_r$.  If we consider the action of $A$ on $H_r$, we obtain the homomorphism $$\rho:A\rightarrow\text{GL}(H_r).$$  Since $A$ is abelian, $\rho(A)$ is an abelian subgroup of $\text{GL}(H_r)$.  We have also shown that $H_r$ is an irreducible $\mathbb{F}_{p_i}A$-module.  Since the action of $A$ on $H_r$ factors through $\rho(A)$, we have that $H_r$ is also an irreducible $\mathbb{F}_{p_i}\rho(A)$-module.  Hence, $\rho(A)$ is an abelian subgroup of $\text{GL}(H_r)$ acting irreducibly on $H_r$.  Using Lemma 2, we have that $\rho(A)$ is cyclic and so acts by multiplication via nonzero elements of a finite field.  \par
         Since $B_r\cong\rho_r(A_{r-1})\leq \rho(A)$, we have that $B_r$ is cyclic and act by multiplication by elements of $\mathbb{F}_{p_i^{e_r}}^\times$.  We will identify $H_r$ with the additive group of $\mathbb{F}_{p_i^{e_r}}$, and consider $B_r\leq \mathbb{F}_{p_i^{e_r}}^\times$ acting by multiplication. \par 
         Let $1\neq g\in B_r$.   Now, $g$ acts as multiplication by some scalar $1\neq \lambda\in \mathbb{F}_{p_i^{e_r}}^\times$.  Suppose $v\in H_r$ satisfies $g\cdot v=v$.  Then, $\lambda v=v$ and so $(\lambda-1)v=0$.  Since $\lambda\neq 1$, we must have that $v=0$. Therefore, the zero vector is the only fixed vector of $g$, hence $C_{H_r}(g)=0$, and so $B_r$ acts Frobeniusly on $H_r$.  Set $h_r:=|H_r|$.  We have $b_r\mid(h_r-1)$, and so $$h_r\equiv 1(\text{mod }b_r).$$ Since $r$ was arbitrary, we repeat this for each $B_r$ to obtain the desired sequence of congruences.
\end{proof}
In Theorem 1, the existence of the character degree $d$ is used to obtain a faithful action of the Hall $\pi$-subgroup on the relevant $\pi'$-part of the Fitting subgroup of $G$.  In Theorem 2, we will remove the condition that $(d,d+e)=1$.  We will then assume the existence of the aforementioned faithful action between an abelian $\pi$-subgroup on the relevant $\pi'$-part of the Fitting subgroup of $G$, but this allows us to forgo the assumption that $d$ is an irreducible character degree.  Note that our abelian subgroup in Theorem 2 is no longer Hall, because its order and index will not be coprime.  Since we are assuming faithfulness, the following essentially becomes a corollary of Theorem 1.
\begin{theoremtwo}
    Let $\pi=\{d_1,\dots, d_m\}$ and $\omega=\{p_1,\dots ,p_n\}$ be sets of primes.  Let $D=d_1^{k_1}\cdots d_m^{k_m}$ where $k_j\in\mathbb{N}$ for all $j$.  Define $$\bar\omega=\omega\setminus\pi \text{   and   } \tau=\pi\cap\omega.$$  
    Suppose $G$ is a solvable group of order $Dp_1^{a_1}\cdots p_n^{a_n}$, where $a_i\in\mathbb{N}$ for all $i$, such that $A\leq G$ is an abelian $\pi$-subgroup of order $D$ such that $C_A(F_{\pi'})=1$ where $$F_{\pi'}=\prod_{q\not\in\pi}O_q(G)=\prod_{q\in\bar\omega}O_q(G).$$  Then there exist prime powers $h_1,\dots,h_t$ and $b_1,\dots,b_t\in\mathbb{N}$, such that $$ h_1\equiv 1(\text{mod }b_1),\dots, h_t\equiv 1(\text{mod }b_t),$$ where $b_1\cdots b_t=D$ and $h_1\cdots h_t\mid p_1^{a_1}\cdots p_n^{a_n}.$
\end{theoremtwo}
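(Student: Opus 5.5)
The plan is to rerun the second half of the proof of Theorem 1 with $F_{\pi'}$ in place of $F$. The faithfulness that Theorem 1 had to derive from the character degree and from $C_G(F)\leq F$ is here simply assumed as $C_A(F_{\pi'})=1$. The only new points are that $A$ is no longer a Hall subgroup, so coprimality must be checked directly, and that the primes in $\tau$ have to be kept out of the final count.

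First, write $F_{\pi'}=\prod_{q\in\bar\omega}P_q$ with $P_q=O_q(G)$. Every $q\in\bar\omega$ lies outside $\pi$, so $(|A|,q)=1$ and $A$ acts coprimely on each $P_q$ by conjugation. Set $V_q=P_q/\Phi(P_q)$. By Maschke's theorem, $V_q$ decomposes as a direct sum of irreducible $\mathbb{F}_qA$-modules $H_{q1}\oplus\cdots\oplus H_{qr_q}$; let $\mathscr{H}$ be the set of all these summands.

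Next I show $\bigcap_{H\in\mathscr{H}}C_A(H)=1$. Suppose $a\in A$ centralizes every $H\in\mathscr{H}$. Then it centralizes every $V_q$, so by Lemma 1 (applied to the $\pi$-element $a$ acting on the $q$-group $P_q$, a coprime action) it centralizes every $P_q$, hence all of $F_{\pi'}$. The hypothesis $C_A(F_{\pi'})=1$ then forces $a=1$.

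From here the proof of Theorem 1 goes through verbatim.
\begin{enumerate}
\item Build the chain $A=A_0>A_1>\cdots>A_t=1$ with $A_r=C_{A_{r-1}}(H_r)$, where $H_1,\dots,H_t$ are distinct members of $\mathscr{H}$. Each $H_r$ can be chosen with $C_{A_{r-1}}(H_r)<A_{r-1}$ because the intersection of all centralizers is trivial.
\item Set $b_r=|A_{r-1}/A_r|$, so that $\prod_r b_r=|A|=D$.
\item The image of $A$ in $\mathrm{GL}(H_r)$ is abelian and acts irreducibly, so by Lemma 2 it is cyclic and embeds in $\mathbb{F}_{q^{e}}^\times$ acting by scalar multiplication. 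Hence $B_r=A_{r-1}/A_r$ acts faithfully and fixed-point-freely on $H_r$.
\item With $h_r=|H_r|$, fixed-point-freeness gives $b_r\mid h_r-1$, that is, $h_r\equiv 1\pmod{b_r}$.
\end{enumerate}

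For the divisibility, the $H_r$ are distinct summands, so $h_1\cdots h_t$ divides $\prod_{q\in\bar\omega}|V_q|$. This in turn divides $\prod_{q\in\bar\omega}|O_q(G)|$, which divides the $\bar\omega$-part of $|G|$. Since each $q\in\bar\omega$ is some $p_i\notin\pi$, the $p_i$-part of $|G|=Dp_1^{a_1}\cdots p_n^{a_n}$ is exactly $p_i^{a_i}$, because $D$ contributes no factor of $p_i$. Hence $h_1\cdots h_t\mid\prod_{p_i\in\bar\omega}p_i^{a_i}\mid p_1^{a_1}\cdots p_n^{a_n}$.

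The main obstacle is this bookkeeping around $\tau$. If $O_q(G)$ for some $q\in\tau$ were allowed into the count, the action would not be coprime, Lemma 1 and Maschke's theorem would fail, and the $q$-part of $|G|$ would also absorb a factor of $D$. Restricting to $F_{\pi'}$, as the hypothesis does, avoids all of this. The case $A=1$ is trivial: take $t=0$, with the empty products equal to $1$.
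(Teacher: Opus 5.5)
Your proposal is correct and follows the paper's proof essentially verbatim. You pass to the Frattini quotients $V_q=P_q/\Phi(P_q)$ for $q\in\bar\omega$, decompose by Maschke, combine Lemma 1 with $C_A(F_{\pi'})=1$ to get $\bigcap_{H\in\mathscr{H}}C_A(H)=1$, and then rerun the centralizer-chain argument of Theorem 1. Your explicit check that the $p_i$-part of $|G|$ is exactly $p_i^{a_i}$ for $p_i\in\bar\omega$, and you correctly cite Lemma 1 where the paper's text says Lemma 2; the paper leaves that divisibility check inside ``repeat the same proof.''
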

\begin{proof}
    Note that in Theorem 1, the assumption that $D\in\text{cd}(G)$ implies $O_{d_j}(G)=1$ for each $j$.  Hence $F=\textbf{F}(G)=\prod_{i-1}^nO_{p_i}(G)$, and faithfulness of the action follows from the fact that $C_G(F)\leq F$.  Now, for Theorem 2, the primes in $\tau$ may divide both $|A|$ and $|\textbf{F}(G)|$.  Instead, let $F_{\pi'}=\prod_{q\in\bar\omega}O_q(G)$ and we assume $C_A(F_{\pi'})=1$, hence $A$ acts faithfully on $F_{\pi'}$.  For each prime $q\in\bar\omega$, define $P_q:=O_q(G)$.  Then, we will define $$V_q:=P_q/\Phi(P_q).$$  Since $(q,|A|)=1$, by Maschke's theorem \cite{CTFG}, we may write $$V_q=H_{q1}\oplus\cdots\oplus H_{qr_q},$$ where each $H_{qj}$ is an irreducible $\mathbb{F}_qA$-module.  Let $$\mathscr{H}=\{H_{qj}:q\in\bar\omega, 1\leq j\leq r_q\}.$$  Suppose $x\in\bigcap_{H\in\mathscr{H}}C_A(H)$.  Then, $x$ centralizes every irreducible summand $H_{qj}$, and so $x$ centralizes each $V_q$.  Thus, $x$ acts trivially on $P_q/\Phi(P_q)$ for every $q\in\bar\omega$.  Since $x\in A$, we know $(|x|,q)=1$ and so $x$ is a $q'$-automorphism of $P_q$.  By Lemma 2, $x$ must then act trivially on $P_q$ for each $q\in\bar\omega$. Thus, $x\in C_A(F_{\pi'})=1$.  Thus $x=1$, and so $\bigcap_{H\in\mathscr{H}}C_A(H)=1$.  From here, we construct the same centralizer chain as in Theorem 1 and repeat the same proof.
\end{proof}

\end{document}